\documentclass[a4paper,11pt]{article}

\usepackage[english]{babel}
\usepackage{authblk,fullpage}
\usepackage{amsthm,amsmath,graphicx,url,pdfsync,enumerate,booktabs,xcolor,amssymb}        
\allowdisplaybreaks

\usepackage{graphicx}%
\usepackage{multirow}%
\usepackage{amsmath,amssymb,amsfonts}%
\usepackage{amsthm}%
\usepackage{mathrsfs}%
\usepackage[title]{appendix}%
\usepackage{xcolor}%
\usepackage{textcomp}%
\usepackage{manyfoot}%
\usepackage{booktabs}%
\usepackage{algorithm}%
\usepackage{algorithmicx}%
\usepackage{algpseudocode}%
\usepackage{listings}%
\usepackage{enumerate,tikz}%
\usepackage{thmtools}

\newtheorem{theorem}{Theorem}[section]
\newtheorem{definition}{Definition}[section]

\newtheorem{remark}{Remark}[section]

\newtheorem{example}{Example}[section]
\newtheorem{corollary}{Corollary}[section]

\newcommand{\R}{\mathbb{R}}

\title{\bf A Complete Characterization of the Inverse Eigenvector Centrality Problem \\
	for Undirected Graphs}

\author[1]{Mauro Passacantando}

\author[2]{Fabio Raciti}

\affil[1]{\small University of Milano-Bicocca, Department of Business and Law, Via Bicocca degli Arcimboldi 8, 20126 Milan, Italy, \texttt{mauro.passacantando@unimib.it}}

\affil[2]{\small Department of Mathematics and Computer Science, University of Catania, Viale A. Doria 6, 95125 Catania, Italy, \texttt{fabio.raciti@unict.it}}

\date{}

\begin{document}

\maketitle

\noindent\textbf{Abstract:} 
We study the inverse eigenvector centrality problem on connected undirected graphs, namely, whether a given positive vector can be realized by assigning suitable edge weights. 
We provide a complete characterization in terms of stable sets and their neighborhoods, showing that the undirected case requires nontrivial global constraints absent in the directed setting.

\

\noindent\textbf{Keywords:} eigenvector centrality, inverse problem, undirected graph, stable set

\

\noindent{\bf MSC Classification:} 05C69; 05C82; 15A18; 90C05.

\maketitle

\section{Introduction}
\label{sec:1}

Measures of centrality play a fundamental role in network science, where they quantify the importance of nodes across a wide range of applications, from social and biological systems to technological networks. 
We refer to~\cite{Newman2018} for a comprehensive overview.

Among these, eigenvector centrality is one of the most widely used measures~\cite{Bonacich1972, Bonacich1987}. 
It assigns each node a score proportional to the sum of its neighbors' scores, thereby capturing the idea that connections to important nodes contribute more to a node's importance. 
In its standard formulation, eigenvector centrality is defined as the positive eigenvector associated with the largest eigenvalue of a (possibly weighted) adjacency matrix of a connected graph. 
The Perron-Frobenius theorem ensures that the largest eigenvalue is simple and admits a unique (up to scaling) strictly positive eigenvector, while all other eigenvectors necessarily have mixed signs~\cite{HornJohnson2013,Minc1988}.
As a well-established measure of node importance that accounts for the influence of neighboring nodes, eigenvector centrality continues to find applications in complex networks, including tennis competition systems and agro-product circulation networks~\cite{Arcagni2023, Ma2025}.

In this paper, we consider the corresponding inverse problem, namely, whether a given positive vector can be realized as an eigenvector centrality by suitably choosing the edge weights of a graph. 
This problem has been investigated in the literature; see, for instance,~\cite{Nicosia2012}, where it is shown that the inverse problem admits infinitely many positive solutions for a directed, strongly connected graph. 
This nonuniqueness has been exploited very recently in~\cite{PassacantandoRaciti2026} to introduce a family of six optimization problems defined over the solution set of the inverse eigenvector centrality problem, each corresponding to a different strategy for realizing the same target centrality. 
Recent work has also highlighted the sensitivity of eigenvector centrality to small perturbations of the edge weights. 
In particular,~\cite{BenziGuglielmi2025} show that even small structured perturbations can significantly alter the induced ranking.

The inverse problem is markedly different for undirected graphs. In this case, the feasibility of a given vector as an eigenvector centrality cannot be ensured by local balance conditions alone, but instead depends on global structural constraints of the graph. We show that these constraints can be expressed in terms of stable sets and their external neighborhoods, leading to a complete characterization of the inverse problem.

Finally, we remark that our study is related, at a broader level, to the classical inverse eigenvalue problem, which concerns the construction of matrices with prescribed spectral properties (see, e.g.,~\cite{Chu1998,Friedland1989}). 
However, the inverse eigenvector centrality problem exhibits distinctive combinatorial features that are specific to network structures.
Our main contribution is a complete characterization of the inverse eigenvector centrality problem on connected undirected graphs. We prove that a positive vector can be realized as an eigenvector centrality if and only if it satisfies a family of equalities and (strict) inequalities indexed by stable sets and their external neighborhoods (see, e.g.,~\cite{michini2013}). 
The result is obtained via a convex-analytic reformulation of the problem, combined with structural properties of stable sets and an application of Farkas Lemma.

\section{The Inverse Eigenvector Centrality Problem}
\label{sec2}

Let $G = (V,E)$ be a connected simple undirected graph, where $n=|V|$ and $m=|E|$, with a positive weight $w_{ij}$ associated with each edge $\{i,j\} \in E$. 
Then, the weighted adjacency matrix $A \in \R^{n \times n}$, defined as follows
\begin{align}\label{e:A}
	a_{ij} =
	\begin{cases}
	w_{ij} & \text{if $\{i,j\} \in E$},
	\\
	0 & \text{otherwise},	
	\end{cases}
\end{align}
is irreducible~\cite{HornJohnson2013}. 
Moreover, the Perron-Frobenius theorem guarantees that the spectral radius $\rho>0$ of $A$ is an algebraically simple eigenvalue, the corresponding eigenvector $c$ is unique (up to scaling), and it is the unique positive eigenvector of $A$ (see, e.g.,~\cite{Minc1988}).
Each component $c_i$ is called the eigenvector centrality of node $i\in V$ and is proportional to the sum of the centralities of its neighbors:
\[
	\rho \, c_i = \sum_{j \in V} a_{ij} \, c_j,
\]
that is, a node is considered important if it is connected to other important nodes~\cite{Bonacich1972}.

We now consider the inverse eigenvector centrality problem.
 
\begin{definition}\label{def1}
Given a positive vector $c \in \R^n$ and $\rho>0$, the inverse eigenvector centrality problem (IECP) consists in finding positive edge weights $w_{ij}$ such that $A c = \rho \, c$.
\end{definition}

We note that in the above definition, the parameter $\rho$ can always be assumed to be equal to 1 without loss of generality. 
Moreover, the requirement that all edge weights be positive and not only nonnegative is crucial for $c$ to actually be an eigenvector centrality vector, as the following example shows.

\begin{example}[label=ex1]
Consider the simple graph shown in Figure~\ref{fig:ex}. 
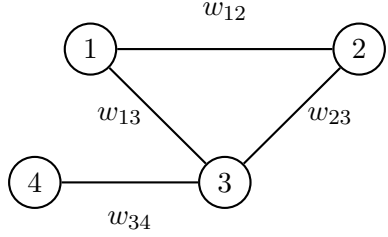
\begin{figure}[htbp]
	\centering
	\begin{tikzpicture}[
		node distance=2.5cm,
		every node/.style={circle, draw, thick, minimum size=0.5cm},
		edge/.style={draw, thick}
		]
		
		\node (3) [fill=gray!0] {3};
		\node (1) [above left of=3] {1};
		\node (2) [above right of=3] {2};
		\node (4) [left of=3] {4};
		
		\draw[edge] (1) -- (3) node[midway, left, draw=none] {$w_{13}$};
		\draw[edge] (2) -- (3) node[midway, right, draw=none] {$w_{23}$};
		\draw[edge] (4) -- (3) node[midway, below, draw=none] {$w_{34}$};
		\draw[edge] (1) -- (2) node[midway, above, draw=none] {$w_{12}$};
		
	\end{tikzpicture}
	\caption{Graph of Example~\ref{ex1}.}
	\label{fig:ex}
\end{figure}
If we fix $c = (1, 1, 1, 1)$ and $\rho=1$, then the system $A c = \rho \, c$ reads
\begin{align*}
	\begin{cases}
	w_{12} + w_{13} = 1 \\
	w_{12} + w_{23} = 1 \\
	w_{13} + w_{23} + w_{34} = 1 \\
	w_{34} = 1
	\end{cases} 
\end{align*}
and its unique nonnegative solution is $w_{12}=w_{34}=1$, $w_{13}=w_{23}=0$. 
The adjacency matrix corresponding to this set of weights is
$$
A = \begin{pmatrix}
0 & 1 & 0 & 0 \\
1 & 0 & 0 & 0 \\
0 & 0 & 0 & 1 \\
0 & 0 & 1 & 0
\end{pmatrix},
$$
its largest eigenvalue is 1 with multiplicity 2, and the corresponding eigenvectors are of the form $(a,a,b,b)$, with $a,b \in \R$. 
Therefore, forcing some weights to zero effectively disconnects the graph, leading to the loss of irreducibility of the matrix $A$. 
Thus, $(1, 1, 1, 1)$ is not the unique (positive) eigenvector associated with the eigenvalue 1, and thus cannot be considered a ``true'' centrality.
\end{example}

We now provide a set of necessary and sufficient conditions on the vector $c$ for the inverse eigenvector centrality problem to admit a solution. For any subset $S \subset V$, we denote its external neighborhood as
$$
N(S) := \left\{j \in V \setminus S:\ \exists\ i \in S \text{ such that } \{i,j\} \in E \right\}.
$$
Moreover, we recall that $S \subset V$ is called a stable set if for any $i,j \in S$ one has $\{i,j\} \notin E$. 
	
\begin{theorem}\label{t1}
Let $G = (V, E)$ be a connected simple undirected graph and $c$ a positive vector in $\R^n$. 
We define the following families of stable sets:
\begin{align*}
& \mathcal{S}_1 :=\{ S \subset V:\ 
\text{$S \neq \emptyset$, $S$ is stable and there is no edge $\{k,\ell\} \in E$ s.t. $k,\ell \notin S$}\},
\\
& \mathcal{S}_2:=\{ S \subset V:\ 
\text{$S \neq \emptyset$, $S$ is stable and there exists an edge $\{k,\ell\} \in E$ s.t. $k,\ell \notin S$}\}.
\end{align*}
Then, IECP admits a solution, i.e., $c$ is an eigenvector centrality vector, if and only if the following conditions hold:
\begin{align}
\sum_{j \in S} c^2_j = \sum_{j \in N(S)} c^2_j
\qquad
\text{for any $S \in \mathcal{S}_1$},
\label{e:NSC1a}
\\	 
\sum_{j \in S} c^2_j < \sum_{j \in N(S)} c^2_j
\qquad
\text{for any $S \in \mathcal{S}_2$}.
\label{e:NSC1b}
\end{align}
\end{theorem}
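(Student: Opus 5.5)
We take $\rho=1$ and substitute $x_{ij}:=w_{ij}c_ic_j>0$ for each edge $\{i,j\}\in E$. Multiplying the $i$-th equation of $Ac=c$ by $c_i$ turns IECP into finding a strictly positive edge vector $x\in\R^m$ with
\[
\sum_{j:\{i,j\}\in E} x_{ij} = c_i^2 \qquad \text{for all } i\in V .
\]
If $B\in\R^{n\times m}$ is the unsigned vertex–edge incidence matrix and $b:=(c_1^2,\dots,c_n^2)$, the question is whether $Bx=b$ has a solution with $x>0$. The task is therefore to describe when $b$ lies in the relative interior of the cone generated by the columns of $B$. This is a fractional perfect $b$-matching problem with strictly positive edge values.

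\emph{Necessity.} Let $S$ be nonempty and stable, and let $x>0$ solve $Bx=b$. Since $S$ is stable, every edge incident to $S$ has its other endpoint in $N(S)$. Hence
\[
\sum_{j\in S}c_j^2=\sum_{\text{edges meeting } S}x_e\le \sum_{j\in N(S)}c_j^2 ,
\]
and the difference equals the total $x$-weight on edges that touch $N(S)$ but not $S$. This includes edges with both endpoints in $N(S)$. It is worth double-checking the definitions of $\mathcal S_1,\mathcal S_2$ here: an edge with both ends outside $S$ contributes $2x_e$, and one with exactly one end in $N(S)$ and the other in $V\setminus(S\cup N(S))$ contributes $x_e$. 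If $S\in\mathcal S_2$, such an edge exists; it cannot meet $S$, so it touches $N(S)$ or lies in $V\setminus(S\cup N(S))$. In the latter case, connectivity yields a path from it to $N(S)$ whose edges avoid $S$, so some edge touches $N(S)$ but not $S$, and the inequality is strict. If $S\in\mathcal S_1$, every edge meets $S$, so the graph is bipartite with sides $S$ and $V\setminus S=N(S)$ (by connectivity), and equality holds.

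\emph{Sufficiency.} First I would apply Farkas' Lemma to obtain nonnegative solvability. $Bx=b$ with $x\ge0$ is solvable iff every $y\in\R^n$ with $y_i+y_j\ge0$ for all edges satisfies $b^\top y\ge0$. The polyhedral cone $\{y: y_i+y_j\ge 0\}$ is generated by its extreme rays together with its lineality space. The lineality space is nontrivial exactly when $G$ is bipartite, and it is spanned by $\pm(\mathbf 1_{P}-\mathbf 1_{Q})$ for the bipartition $P,Q$. I would identify the extreme rays with the known half-integral description $y=\mathbf 1_{N(S)}-\mathbf 1_S+\mathbf 1_T$ for suitable stable $S$ and $T\subseteq V\setminus(S\cup N(S))$. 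Reducing to $T=\emptyset$ is harmless because $b\ge0$. The inequalities \eqref{e:NSC1a}–\eqref{e:NSC1b} then give $b^\top y\ge0$, and the bipartite equalities handle the lineality directions.

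To obtain strict positivity, I would use the standard fact that a positive solution exists iff, for every edge $e$, some nonnegative solution has $x_e>0$; averaging such solutions gives $x>0$. Equivalently, by a strict version of Farkas (Motzkin/Stiemke alternative), no $y$ with $y_i+y_j\ge0$ on all edges, strict on at least one, may have $b^\top y\le 0$. Such a $y$ decomposes as a nonnegative combination of extreme rays plus a lineality element. The strict inequality on some edge forces a ray $\mathbf 1_{N(S)}-\mathbf 1_S$ with $S$ not covering all edges, that is $S\in\mathcal S_2$, to appear with positive coefficient. Then \eqref{e:NSC1b} gives $b^\top y>0$.

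\emph{Main obstacle.} The hardest step is characterizing the extreme rays of $\{y:y_i+y_j\ge0\ \forall\{i,j\}\in E\}$ modulo lineality. I would first try to prove directly that any $y$ in this cone is a nonnegative combination of vectors $\mathbf 1_{N(S)}-\mathbf 1_S$ and $\mathbf 1_T$, plus a lineality element. The method is a layer-cake argument: set $S_t=\{i:y_i<-t\}$ and $N_t=\{i:y_i> t\}$ for $t\ge0$. Each $S_t$ is stable, and $N(S_t)\subseteq N_t$. Integrating over $t$ then expresses $y$ as an integral of $\mathbf 1_{N_t}-\mathbf 1_{S_t}$, and $N_t\setminus N(S_t)$ can be absorbed into $\mathbf 1_T$ terms. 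The care is in checking where strictness on a given edge survives in this decomposition.
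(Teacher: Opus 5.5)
Your proposal is correct. Necessity is the paper's argument, and for sufficiency you take a genuinely different route.

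On necessity, one small slip: the edges contributing $2x_e$ to the difference are those with both ends in $N(S)$, not all edges with both ends outside $S$. Strictness only needs one positive contribution, so nothing breaks.

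The paper handles positivity by imposing $w_{ij}\ge\varepsilon$ and passing to $z=w-\varepsilon\ge 0$, with an $\varepsilon$-dependent right-hand side $q$. It then applies the ordinary Farkas Lemma and checks $q^\top v\le 0$ on all vertices of $P$. These vertices come from the Nemhauser--Trotter description of the fractional stable set polytope. The strict inequalities for $\mathcal S_2$ are exactly what permit a uniform small $\varepsilon$.

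You keep $b=(c_j^2)_j$ fixed and encode strict positivity through a Stiemke/Motzkin alternative: $b^\top y\ge 0$ on $K=\{y:\ y_i+y_j\ge 0\}$, with strict inequality whenever $B^\top y\neq 0$. Your first option for generating $K$, citing the half-integral description, is essentially the paper's tool. Your layer-cake alternative, however, makes the proof self-contained. With $S_t=\{i:\ y_i<-t\}$ and $N_t=\{i:\ y_i>t\}$, the identity $y=\int_0^\infty(\mathbf 1_{N_t}-\mathbf 1_{S_t})\,dt$ writes $y$ as a finite positive combination of vectors of $K$ of the form $(\mathbf 1_{N(S)}-\mathbf 1_S)+\mathbf 1_T$. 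So no lineality term, odd-cycle condition or half-integrality is needed. It also avoids the imprecision, shared with the paper, of calling these generators ``extreme rays'' when $K$ need not be pointed (as for bipartite $G$).

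The strictness step you flag as delicate is in fact immediate. Write $b(X)=\sum_{j\in X}c_j^2$. Every summand lies in $K$, so $y_i+y_j>0$ forces an interval of levels $t$ at which the edge $\{i,j\}$ avoids $S_t$ and meets $N_t$. At such $t$, either $S_t\in\mathcal S_2$, so $b(N_t)-b(S_t)\ge b(N(S_t))-b(S_t)>0$, or $S_t=\emptyset\neq N_t$, so $b(N_t)-b(S_t)=b(N_t)>0$. At all other levels the integrand is nonnegative by \eqref{e:NSC1a}--\eqref{e:NSC1b}, which gives $b^\top y>0$.

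The paper's route ties the result to a classical polyhedral theorem with an explicit finite vertex list. Yours is elementary and shows directly why only stable sets and their neighborhoods can appear.
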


\begin{proof}
First, we prove that~\eqref{e:NSC1a}--\eqref{e:NSC1b} are necessary conditions. 
If $c$ is an eigenvector centrality vector, then there exists a symmetric weighted adjacency matrix $A$, defined as in~\eqref{e:A}, such that
$$
	c_j = \sum_{i \in V} a_{ij} c_i
	\qquad \forall\ j \in V.
$$
Multiplying each equation by $c_j$, we get
\begin{align}\label{e:NSC22}
	c^2_j = \sum_{i \in V} a_{ij} c_i c_j
\qquad \forall\ j \in V.
\end{align}
For any stable set $S \subset V$, we have
\begin{align}\label{e:NSC33}
\sum_{j \in S} c^2_j 
= \sum_{j \in S} \sum_{i \in V} a_{ij} c_i c_j
= \sum_{j \in S} \sum_{i \in N(S)} w_{ij} c_i c_j
= \sum_{i \in N(S)} \sum_{j \in S} w_{ji} c_i c_j.
\end{align}
If $S \in \mathcal{S}_1$, then
\begin{align}\label{e:NSC44}
\sum_{i \in N(S)} \sum_{j \in S} w_{ji} c_i c_j
=
\sum_{i \in N(S)} \sum_{j \in V} a_{ji} c_i c_j
= 
\sum_{i \in N(S)} c^2_i,
\end{align}
hence~\eqref{e:NSC33} and \eqref{e:NSC44} imply \eqref{e:NSC1a}.
If $S \in \mathcal{S}_2$, then there exists an edge $\{k,\ell\} \in E$ such that $k \notin S$ and $\ell \notin S$. Since the graph is connected, we can assume without loss of generality that $k \in N(S)$, thus
\begin{align*}
	\sum_{j \in S} a_{jk} c_k c_j
	<
	\sum_{j \in S} a_{jk} c_k c_j + w_{\ell k} c_k c_\ell
	\leq	
	\sum_{j \in V} a_{jk} c_k c_j.
\end{align*}
Therefore, we have
\begin{align}\label{e:NSC55}
	\sum_{i \in N(S)} \sum_{j \in S} a_{ji} c_i c_j
	<
	\sum_{i \in N(S)} \sum_{j \in V} a_{ji} c_i c_j
	= 
	\sum_{i \in N(S)} c^2_i,
\end{align}
hence~\eqref{e:NSC33} and \eqref{e:NSC55} imply \eqref{e:NSC1b}.

We now prove that~\eqref{e:NSC1a}--\eqref{e:NSC1b} are sufficient conditions. 
The vector $c$ is an eigenvector centrality vector if and only if there exists $\varepsilon >0$ such that the system 
\begin{align}\label{e:NSC3}
	\begin{cases}
	\sum\limits_{i \in N(j)} w_{ij} c_i = c_j 
	& \qquad \forall\ j \in V,
	\\
	w_{ij} \geq \varepsilon 
	& \qquad \forall\ \{i,j\} \in E,
	\end{cases}
\end{align}
admits at least a solution.
The system~\eqref{e:NSC3} is equivalent to 
\begin{align*}
	\begin{cases}
	\sum\limits_{i \in N(j)} w_{ij} c_i c_j = c^2_j 
	& \qquad \forall\ j \in V,
	\\
	w_{ij} \geq \varepsilon 
	& \qquad \forall\ \{i,j\} \in E,
	\end{cases}
\end{align*}
which, in turn, can be rewritten as
\begin{align*}
	\begin{cases}
	\sum\limits_{i \in N(j)} z_{ij} c_i c_j 
	= 
	c^2_j - \varepsilon \sum\limits_{i \in N(j)} c_i c_j 
	& \qquad \forall\ j \in V,
	\\
	z_{ij} \geq 0  
	& \qquad \forall\ \{i,j\} \in E,
	\end{cases}
\end{align*}
where $z_{ij}=w_{ij} - \varepsilon$ for any $\{i,j\} \in E$. 
The latter system can be written in matrix form as 
\begin{align}\label{e:NSC5}
\begin{cases}
	B z = q
	\\
	z \geq 0
\end{cases}
\end{align}
where $B \in \R^{n \times m}$ is defined as
$$
	B_{k, \{i,j\}} = 
	\begin{cases}
	c_i c_j & \text{if $k=i$ or $k=j$},
	\\
	0 & \text{otherwise},
	\end{cases}
$$ 
and $q_j = c^2_j - \varepsilon \sum\limits_{i \in N(j)} c_i c_j$ for any $j \in V$.
By the Farkas Lemma, the system~\eqref{e:NSC5} admits a solution if and only if the  system 
$$
\begin{cases}
	q^\top x >0
	\\
	B^\top x \leq 0
\end{cases}
$$
is impossible, i.e., 
\begin{align}\label{e:NSC6}
	q^\top x \leq 0 \qquad \forall\ x \in C:=\{ x \in \R^n:\ B^\top x \leq 0\}.
\end{align}
Since the polyhedral cone $C$ has a finite set of extreme rays $v^1,\dots,v^p$, then  $c$ is an eigenvector centrality vector if and only if there exists $\varepsilon >0$ such that
\begin{align}\label{e:NSC7}
	q^\top v^i \leq 0 \qquad \forall\ i \in \{1,\dots,p\}.
\end{align}
We remark that $C$ can be written as $C=\{x \in \R^n:\ x_i + x_j \leq 0 \quad \forall\ \{i,j\} \in E\}$. 
If we define the polytope $P = \{ x \in C:\ x_i \geq -1 \}$, then the extreme rays of $C$ are the vertices of $P$ that are different from the null vector. 
If we consider the map $M: \R^n \to \R^n$ defined as $x_i \mapsto y_i = (x_i+1)/2$ for any $i=1,\dots,n$, then the image of $P$ through $M$ is the fractional stable set polytope
$$
	FSTAB = \{ y \in \R^n:\ y_i \geq 0, \quad y_i + y_j \leq 1 \quad \forall\ \{i,j\} \in E\},
$$ 
and the vertices of $P$ and $FSTAB$ are in a one-to-one correspondence.
The well-known result by~\cite{NemTro74} guarantees that the vertices of $FSTAB$ are the vectors $y \in \{0, 1/2, 1\}^n$ such that the following conditions hold:
\begin{itemize}
	\item $\{i \in V:\ y_i=1\}$ is a stable set $S$;
	\item $\{i \in V:\ y_i=0\}$ contains the set $N(S)$ of neighbors of $S$;
	\item Each connected component of the subgraph induced by the subset $\{i \in V:\ y_i=1/2\}$ contains an odd cycle.
\end{itemize}
Therefore, the vertices of $P$ are the vectors $x \in \{ -1, 0, 1\}^n$ such that the following conditions hold:
\begin{itemize}
	\item $\{i \in V:\ x_i=1\}$ is a stable set $S$;
	\item $\{i \in V:\ x_i=-1\}$ contains the set $N(S)$ of neighbors of $S$;
	\item Each connected component of the subgraph induced by the subset $\{i \in V:\ x_i=0\}$ contains an odd cycle.
\end{itemize}
For clarity, let divide the vertices of $P$ into three subsets: vectors that do not have any components equal to 1, vectors that do not have any components equal to 0, and all remaining vectors. Hence, the system of inequalities~\eqref{e:NSC7} can be written as
\begin{align}
q^\top v^i \leq 0 & \qquad \forall\ i \in \{1,\dots,p\}: v^i \in \{-1,0\}^n, 
\label{e:NSC7a}
\\
q^\top v^i \leq 0 & \qquad \forall\ i \in \{1,\dots,p\}: v^i \in \{-1,1\}^n, 
\label{e:NSC7b}
\\
q^\top v^i \leq 0 & \qquad \forall\ i \in \{1,\dots,p\}: v^i \in \{-1,0,1\}^n. 
\label{e:NSC7c}
\end{align}
We remark that if $\varepsilon$ is small enough, then $q>0$, hence conditions~\eqref{e:NSC7a} are always satisfied as $v^i \leq 0$. 
Conditions~\eqref{e:NSC7b} are equivalent to the system
\begin{align*}
\sum_{j \in S} q_j \leq \sum_{j \in V \setminus S} q_j
\qquad \forall\ \text{stable set $S \subset V$},
\end{align*}
that is equivalent to
\begin{align}\label{e:NSC8}
	\sum_{j \in S} c^2_j 
	+ \varepsilon \left[
	\sum_{j \in V \setminus S} \sum_{i \in N(j)} c_i c_j - 
	\sum_{j \in S} \sum_{i \in N(j)} c_i c_j 
	\right]
	\leq 
	\sum_{j \in V \setminus S} c^2_j
	\qquad \forall\ \text{stable set $S \subset V$}.
\end{align}
We remark that
\begin{align*}
\sum_{j \in V \setminus S} \sum_{i \in N(j)} c_i c_j - 
\sum_{j \in S} \sum_{i \in N(j)} c_i c_j 
& =
\sum_{\substack{\{i,j\} \in E: \\ i \in S, j \notin S}} c_ic_j
+
2 \sum_{\substack{\{i,j\} \in E: \\ i \notin S, j \notin S}} c_ic_j
	-
\sum_{\substack{\{i,j\} \in E: \\ i \in S, j \notin S}} c_ic_j	
\\
& = 2 \sum_{\substack{\{i,j\} \in E: \\ i \notin S, j \notin S}} c_ic_j.
\end{align*}
If $S \in \mathcal{S}_1$, then the latter sum is null and \eqref{e:NSC1a} implies
$$
	\sum_{j \in S} c^2_j 
	=
	\sum_{j \in N(S)} c^2_j 
	\leq
	\sum_{j \in V \setminus S} c^2_j,
$$
hence~\eqref{e:NSC8} holds. 
If $S \in \mathcal{S}_2$, then \eqref{e:NSC1b} implies
$$
\sum_{j \in S} c^2_j 
<
\sum_{j \in N(S)} c^2_j 
\leq
\sum_{j \in V \setminus S} c^2_j,
$$
hence~\eqref{e:NSC8} holds provided that $\varepsilon$ is small enough.

Conditions~\eqref{e:NSC7c} read
\begin{align}\label{e:NSC9}
	\sum_{i \in S} q_i \leq \sum_{i \in T} q_i
\end{align}
holds for any stable set $S \subset V$ and any set $T \subseteq V \setminus S$ such that $N(S) \subseteq T$ and each connected component of $V \setminus (S \cup T)$ contains an odd cycle.
The inequality~\eqref{e:NSC9} reads
\begin{align}\label{e:NSC10}
	\sum_{j \in S} c^2_j 
	+ \varepsilon \left[
	\sum_{j \in T} \sum_{i \in N(j)} c_i c_j - 
	\sum_{j \in S} \sum_{i \in N(j)} c_i c_j 
	\right]
	\leq 
	\sum_{j \in T} c^2_j.
\end{align}
We have
\begin{align*}
	\sum_{j \in T} \sum_{i \in N(j)} c_i c_j - 
	\sum_{j \in S} \sum_{i \in N(j)} c_i c_j 
	& =
	\sum_{\substack{\{i,j\} \in E: \\ i \in S, j \notin S}} c_ic_j
	+
	\sum_{j \in T} \sum_{\substack{i \in N(j): \\ i \notin S}} c_ic_j
	-
	\sum_{\substack{\{i,j\} \in E: \\ i \in S, j \notin S}} c_ic_j	
	\\
	& = \sum_{j \in T} \sum_{\substack{i \in N(j): \\ i \notin S}} c_ic_j.
\end{align*}
If $S \in \mathcal{S}_1$, then the latter sum is null and \eqref{e:NSC1a} implies
$$
\sum_{j \in S} c^2_j 
=
\sum_{j \in N(S)} c^2_j 
\leq
\sum_{j \in T} c^2_j,
$$
hence~\eqref{e:NSC10} holds. 
If $S \in \mathcal{S}_2$, then \eqref{e:NSC1b} implies
$$
\sum_{j \in S} c^2_j 
<
\sum_{j \in N(S)} c^2_j 
\leq
\sum_{j \in T} c^2_j,
$$
hence~\eqref{e:NSC10} holds provided that $\varepsilon$ is small enough.

Therefore, we proved that there exists $\varepsilon>0$ small enough such that the system~\eqref{e:NSC7a}--\eqref{e:NSC7c} is satisfied. Thus, $c$ is an eigenvector centrality vector.
\end{proof}

\begin{remark}\label{rem1}
	Some inequalities of system~\eqref{e:NSC1a}--\eqref{e:NSC1b} can be dropped as redundant. 
	Indeed, system~\eqref{e:NSC1b} can be reduced to 
	$$
	\sum_{j \in S} c^2_j < \sum_{j \in N(S)} c^2_j
	\qquad
	\forall\ S \in \mathcal{F},
	$$
	where $\mathcal{F}$ is the family of stable sets $S \in \mathcal{S}_2$ that satisfy the following conditions:
	\begin{enumerate}[a)]
		\item there is no partition of $S$ as $S = S' \cup S''$ such that $N(S') \cap N(S'') = \emptyset$;
		
		\item there is no stable set $S'$ such that $S \subset S'$ and $N(S)=N(S')$.
	\end{enumerate}
	Indeed, if $S \in \mathcal{S}_2$ and there exists a partition of $S$ into two stable sets $S'$ and $S''$ such that $N(S') \cap N(S'') = \emptyset$, then, $S', S'' \in \mathcal{S}_2$ and the inequality corresponding to $S$ reads
	$$
	\sum_{j \in S'} c^2_j + \sum_{j \in S''} c^2_j
	=
	\sum_{j \in S} c^2_j 
	< 
	\sum_{j \in N(S)} c^2_j
	=
	\sum_{j \in N(S')} c^2_j + \sum_{j \in N(S'')} c^2_j,
	$$
	hence it is redundant as it is the sum of the inequalities corresponding to $S'$ and $S''$.
	Moreover, if there exists a stable set $S'$ such that $S \subset S'$ and $N(S)=N(S')$, then 
	$$
	\sum_{j \in S} c^2_j
	<
	\sum_{j \in S'} c^2_j
	\leq 
	\sum_{j \in N(S')} c^2_j
	=
	\sum_{j \in N(S)} c^2_j,
	$$
	hence the inequality corresponding to $S$ is redundant as it is implied by the condition corresponding to $S'$, which is an equality if $S' \in \mathcal{S}_1$ or a strict inequality if $S' \in \mathcal{S}_2$. 
\end{remark}

\begin{example}[continues=ex1]
We apply Theorem~\ref{t1} and Remark~\ref{rem1} to the graph of Example~\ref{ex1}. Given a positive vector $c \in \R^4$, the system $A c = c$ reads	
$$
\begin{cases}
			c_2 w_{12} + c_3 w_{13} = c_1 \\
			c_1 w_{12} + c_3 w_{23} = c_2 \\
			c_1 w_{13} + c_2 w_{23} + c_4 w_{34} = c_3 \\
			c_3 w_{34} = c_4
		\end{cases} 
$$
whose unique solution is
$$
\begin{cases}
w_{12} = \dfrac{c_1^2 + c_2^2 + c_4^2 - c_3^2}{2 c_1 c_2}, 
&
w_{13} = \dfrac{c_1^2 + c_3^2 - c_2^2 - c_4^2}{2 c_1 c_3}, 
\\[4mm]
w_{23} = \dfrac{c_2^2 + c_3^2 - c_1^2 - c_4^2}{2 c_2 c_3}, 
&
w_{34} = \dfrac{c_4}{c_3}. 
\end{cases} 
$$
This solution is positive if and only if the following three conditions hold:
\begin{equation}\label{ex1:cond}
\begin{cases}
c_3^2 < c_1^2 + c_2^2 + c_4^2 \\
c_2^2 + c_4^2 < c_1^2 + c_3^2 \\
c_1^2 + c_4^2 < c_2^2 + c_3^2 
\end{cases} 
\end{equation}
On the other hand, the stable sets of the considered graph are $\{1\}, \{2\}, \{3\}, \{4\}, \{1,4\}, \{2,4\}$, which all belong to the family $\mathcal{S}_2$. Moreover, the family $\mathcal{F}$ defined in Remark~\ref{rem1} only contains the sets $\{3\}, \{4\}, \{1,4\}, \{2,4\}$. Hence, by Theorem~\ref{t1}, IECP admits a solution if and only if the following conditions hold:
\begin{align}\label{ex1:cond2}
\begin{cases}
	c_3^2 < c_1^2 + c_2^2 + c_4^2 \\
	c_4^2 < c_3^2 \\
	c_1^2 + c_4^2 < c_2^2 + c_3^2 \\
	c_2^2 + c_4^2 < c_1^2 + c_3^2 
\end{cases} 
\end{align}
The second inequality of the latter system is redundant as it is the sum of the third and fourth inequalities, thus system~\eqref{ex1:cond2} actually coincides with~\eqref{ex1:cond}.
\end{example}

The following result shows that the conditions~\eqref{e:NSC1a}-\eqref{e:NSC1b} simplify drastically for graphs with a special structure.

\begin{corollary}
Let $G = (V, E)$ be a connected simple undirected graph and $c \in \R^n$ a positive vector. 
\begin{enumerate}[a)]

\item If $G$ is complete, then IECP admits a solution if and only if 
\begin{align}\label{e:complete}
2 \max\limits_{j \in V} c^2_j < \sum\limits_{j \in V} c^2_j.
\end{align}

\item If $G$ is a complete bipartite graph, where $V=V_1 \cup V_2$, with $V_1 \cap V_2 = \emptyset$ and \linebreak $E=\{\{i,j\}:\ i \in V_1, \ j \in V_2\}$, then IECP admits a solution if and only if 
$$
\sum\limits_{j \in V_1} c^2_j = \sum\limits_{j \in V_2} c^2_j.
$$

\item If $G$ is a star graph with internal node 1, then IECP admits a solution if and only if 
$$
c^2_1 = \sum\limits_{j \in V \setminus \{1\}} c^2_j.
$$

\item If $G$ is a chain graph with $V=\{1,\dots,n\}$ and $E=\{ \{i, i+1 \}:\ i \in V \setminus \{n\}\}$, then IECP admits a solution if and only if the following chain of inequalities and equality holds:
\begin{align}\label{e:chain1}
c_1^2 < c_2^2 < c_1^2 + c_3^2 < c_2^2 + c_4^2 < c_1^2 + c_3^2 + c_5^2 < \dots < \sum_{i \in V_{\rm odd}} c_i^2 = \sum_{i \in V_{\rm even}} c_i^2,
\end{align}
where $V_{\rm odd}=\{ i \in V:\ i \text{ is odd}\}$ and $V_{\rm even}=\{ i \in V:\ i \text{ is even}\}$.

\end{enumerate}	
\end{corollary}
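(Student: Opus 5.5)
Each part follows from Theorem~\ref{t1}, possibly pruned by Remark~\ref{rem1}. In every case I would list the nonempty stable sets, decide which lie in $\mathcal{S}_1$ and which in $\mathcal{S}_2$, compute $N(S)$, and simplify the resulting conditions.

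\emph{(a) Complete graph.} The stable sets are the singletons $\{j\}$, with $N(\{j\})=V\setminus\{j\}$. If $n\ge 3$, removing $j$ leaves an edge, so $\{j\}\in\mathcal{S}_2$. Condition~\eqref{e:NSC1b} then reads $c_j^2<\sum_{i\neq j}c_i^2$ for every $j$, which is equivalent to \eqref{e:complete}. (For $n=2$ the graph is $K_2$, which is also complete bipartite and a star, so it falls under (b).)

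\emph{(b) Complete bipartite graph.} Every stable set lies inside $V_1$ or inside $V_2$, since any vertex of $V_1$ is adjacent to every vertex of $V_2$.
\begin{itemize}
\item If $S=V_1$ (or $S=V_2$), then $N(S)=V_2$ (resp.\ $V_1$) and no edge avoids $S$, so $S\in\mathcal{S}_1$. Condition~\eqref{e:NSC1a} gives the stated equality.
\item If $\emptyset\neq S\subsetneq V_1$, there is a vertex of $V_1\setminus S$ and it is adjacent to some vertex of $V_2$, giving an edge avoiding $S$; so $S\in\mathcal{S}_2$. Here $N(S)=V_2$, and the strict inequality $\sum_S c_j^2<\sum_{V_2}c_j^2$ follows from the equality, because $\sum_S c_j^2<\sum_{V_1}c_j^2$ by positivity of $c$. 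This is exactly the redundancy in part b) of Remark~\ref{rem1}. The case $S\subsetneq V_2$ is symmetric.
\end{itemize}

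\emph{(c) Star.} A star is complete bipartite with $V_1=\{1\}$, so (c) is a special case of (b).

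\emph{(d) Chain (path).} This is the main work. Write $\Sigma(S):=\sum_{j\in S}c_j^2$, $O_k=\{i\le k: i\equiv k \bmod 2\}$ and $E_k=\{i\le k: i\not\equiv k\bmod 2\}$.

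Necessity comes from specific stable sets.
\begin{itemize}
\item $V_{\rm odd}$ and $V_{\rm even}$ both lie in $\mathcal{S}_1$. Each is stable, and each leaves no edge outside it, since every edge $\{i,i+1\}$ has one odd and one even endpoint. Their neighborhoods are each other, so \eqref{e:NSC1a} gives the final equality.
\item For $k<n$, the set $O_k$ is stable, with $N(O_k)=E_k\cup\{k+1\}$. It lies in $\mathcal{S}_2$ when $k\le n-2$, since the edge $\{k+1,k+2\}$ avoids it. Condition~\eqref{e:NSC1b} gives $\Sigma(O_k)<\Sigma(E_k)+c_{k+1}^2=\Sigma(O_{k+1})$, which is exactly the $k$-th strict link in \eqref{e:chain1}.
\item For $k=n-1$, the set $O_{n-1}$ is one of the two parity classes, so the corresponding link is the final equality.
\end{itemize}

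Sufficiency requires checking every $S\in\mathcal{S}_1\cup\mathcal{S}_2$, given \eqref{e:chain1} and hence also the mirror chain read from the other end of the path. The equality together with the left-anchored chain implies the right-anchored chain: subtracting the prefix inequalities from the equality yields the suffix ones.
\begin{itemize}
\item The only members of $\mathcal{S}_1$ are the two parity classes, since a stable set meeting every edge must alternate. Their condition is the assumed equality.
\item For $S\in\mathcal{S}_2$, I would use the decomposition of Remark~\ref{rem1}. Split $S$ into maximal runs of vertices at distance $2$ from each other. Runs separated by a gap of at least $3$ have disjoint neighborhoods, so by part a) of the remark it suffices to treat a single run $S=\{a,a+2,\dots,b\}$. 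Then $N(S)=\{a-1,a+1,\dots,b+1\}\cap V$.
\item For a single run, write $\Sigma(S)=\Sigma(O_b)-\Sigma(O_{a-2})$ and $\Sigma(N(S))=\Sigma(O_{b+1})-\Sigma(O_{a-3})$. The inequality $\Sigma(S)<\Sigma(N(S))$ follows by adding two consequences of the chain: $\Sigma(O_b)<\Sigma(O_{b+1})$ and $\Sigma(O_{a-3})<\Sigma(O_{a-2})$.
\end{itemize}

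The main obstacle is the boundary bookkeeping in this last step. When the run touches vertex $1$ or vertex $n$, one of the two chain links is absent or is the equality rather than a strict inequality. I need to verify that strictness still survives in these cases, using that $S\in\mathcal{S}_2$ forces at least one of the two links to be strict. If both ends are reached, $S$ is a parity class and so lies in $\mathcal{S}_1$, not $\mathcal{S}_2$.
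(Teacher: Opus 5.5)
Your proposal is correct and follows the same route as the paper. Both apply Theorem~\ref{t1} case by case, sort the stable sets into $\mathcal{S}_1$ and $\mathcal{S}_2$, discard redundant inequalities via Remark~\ref{rem1}, and handle the path by telescoping parity prefix sums. Yours is somewhat more careful than the paper's in three places: you name the sets $O_k$ that give necessity, you prove sufficiency for an arbitrary run $\{a,a+2,\dots,b\}$ including the boundary cases (the paper only treats runs of length one and two and says the argument extends), and you rightly restrict (a) to $n\ge 3$, since for $K_2$ the singletons lie in $\mathcal{S}_1$ rather than $\mathcal{S}_2$, and (a) as stated in fact fails there.
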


\begin{proof} \
\begin{enumerate}[a)]

\item The only stable sets are the singletons $S=\{i\}$ for any $i \in V$, and they belong to the class $\mathcal{S}_2$. 
Then, condition~\eqref{e:NSC1b} reads
$$
c^2_i < \sum_{j \in V \setminus\{i\}} c^2_j
\qquad
\forall\ i \in V,
$$
that is equivalent to 
$$
2 c^2_i < \sum_{j \in V} c^2_j
\qquad
\forall\ i \in V,
$$
i.e., $2 \max\limits_{j \in V} c^2_j < \sum\limits_{j \in V} c^2_j$.

\item The only stable sets in the class $\mathcal{S}_1$ are $V_1$ and $V_2$, that provide the equality
\begin{align}\label{e:bipar}
	\sum_{j \in V_1} c^2_j = \sum_{j \in V_2} c^2_j.
\end{align}
Moreover, the stable sets in the class $\mathcal{S}_2$ are the proper subsets of either $V_1$ or $V_2$. If $S \subset V_1$, then the inequality
$$
\sum_{j \in S} c^2_j 
<
\sum_{j \in V_2} c^2_j
$$
is redundant as it is implied by~\eqref{e:bipar}. 
Similarly, it can be shown that the inequalities associated with the subsets of $V_2$ are redundant. 

\item It follows from b), since any star graph is a special case of a complete bipartite graph.

\item The only stable sets in class $\mathcal{S}_1$ are $V_{\rm odd}$ and $V_{\rm even}$, so the system~\eqref{e:NSC1a} reads
\begin{align*}
	\sum_{i \in V_{\rm odd}} c_i^2 = \sum_{i \in V_{\rm even}} c_i^2.
\end{align*}
The inequalities in~\eqref{e:chain1} follow from~\eqref{e:NSC1b}. 
To prove that conditions~\eqref{e:chain1} are equivalent to system~\eqref{e:NSC1a}--\eqref{e:NSC1b}, we note that the stable sets of the class $\mathcal{F}$ described in Remark~\ref{rem1} are sets of consecutive odd nodes or sets of consecutive even nodes. 

First, we consider stable sets of $\mathcal{F}$ that are singletons. 
If $S=\{i\}$ with $3 \leq i \leq n-1$ and $i$ is odd, then the following inequalities from~\eqref{e:chain1}:
$$
	\sum_{\substack{j=2 \\ \text{$j$ even}}}^{i-3} c_j^2
	<
	\sum_{\substack{j=1 \\ \text{$j$ odd}}}^{i-2} c_j^2,
	\qquad
	\sum_{\substack{j=1 \\ \text{$j$ odd}}}^{i} c_j^2
	<
	\sum_{\substack{j=2 \\ \text{$j$ even}}}^{i+1} c_j^2,
$$
imply that
$$
c_i^2 + \sum_{\substack{j=2 \\ \text{$j$ even}}}^{i-3} c_j^2
<
c_i^2 + \sum_{\substack{j=1 \\ \text{$j$ odd}}}^{i-2} c_j^2
=
c_i^2 + \sum_{\substack{j=1 \\ \text{$j$ odd}}}^{i} c_j^2
<
\sum_{\substack{j=2 \\ \text{$j$ even}}}^{i+1} c_j^2
=
\sum_{\substack{j=2 \\ \text{$j$ even}}}^{i-3} c_j^2 + c_{i-1}^2 + c_{i+1}^2,
$$
hence we get the inequality $c_i^2 < c_{i-1}^2 + c_{i+1}^2$ of~\eqref{e:NSC1b} corresponding to $S$. 
If $i=n$ and $n$ is odd, then the following inequality and equality from~\eqref{e:chain1}:
$$
\sum_{\substack{j=2 \\ \text{$j$ even}}}^{n-3} c_j^2
<
\sum_{\substack{j=1 \\ \text{$j$ odd}}}^{n-2} c_j^2,
\qquad
\sum_{\substack{j=1 \\ \text{$j$ odd}}}^{n} c_j^2
=
\sum_{\substack{j=2 \\ \text{$j$ even}}}^{n-1} c_j^2,
$$
imply that
$$
c_n^2 + \sum_{\substack{j=2 \\ \text{$j$ even}}}^{n-3} c_j^2
<
c_n^2 + \sum_{\substack{j=1 \\ \text{$j$ odd}}}^{n-2} c_j^2
=
c_i^2 + \sum_{\substack{j=1 \\ \text{$j$ odd}}}^{n} c_j^2
=
\sum_{\substack{j=2 \\ \text{$j$ even}}}^{n-1} c_j^2
=
\sum_{\substack{j=2 \\ \text{$j$ even}}}^{n-3} c_j^2 + c_{n-1}^2,
$$
hence we get $c_n^2 < c_{n-1}^2$.
If $3 \leq i \leq n$ and $i$ is even, then a similar argument can be used to prove that the inequality of~\eqref{e:NSC1b} corresponding to $S$ follows from~\eqref{e:chain1}.

If $S$ is a stable set of $\mathcal{F}$ consisting of two consecutive odd nodes, then we consider $S=\{i, i+2\}$ where $i$ is odd and $3 \leq i \leq n-3$. 
Then the following inequalities from~\eqref{e:chain1}:
$$
\sum_{\substack{j=2 \\ \text{$j$ even}}}^{i-3} c_j^2
<
\sum_{\substack{j=1 \\ \text{$j$ odd}}}^{i-2} c_j^2,
\qquad
\sum_{\substack{j=1 \\ \text{$j$ odd}}}^{i+2} c_j^2
<
\sum_{\substack{j=2 \\ \text{$j$ even}}}^{i+3} c_j^2,
$$
imply that
\begin{align*}
c_i^2 + c_{i+2}^2 + \sum_{\substack{j=2 \\ \text{$j$ even}}}^{i-3} c_j^2
& < c_i^2 + c_{i+2}^2 + \sum_{\substack{j=1 \\ \text{$j$ odd}}}^{i-2} c_j^2
\\
& = \sum_{\substack{j=1 \\ \text{$j$ odd}}}^{i+2} c_j^2
< \sum_{\substack{j=2 \\ \text{$j$ even}}}^{i+3} c_j^2
= c_{i-1}^2 + c_{i+1}^2 + c_{i+3}^2 + \sum_{\substack{j=2 \\ \text{$j$ even}}}^{i-3} c_j^2,
\end{align*}
hence we get the inequality $c_i^2 + c_{i+2}^2 < c_{i-1}^2 + c_{i+1}^2 + c_{i+3}^2$ of~\eqref{e:NSC1b} corresponding to $S$. 
Similarly, we can prove that the inequality $c_{n-2}^2 + c_{n}^2 < c_{n-3}^2 + c_{n-1}^2$ of~\eqref{e:NSC1b} follows from~\eqref{e:chain1}. 
The above argument can be extended to show that every inequality in~\eqref{e:NSC1b} corresponding to a stable set of the class $\mathcal{F}$ is implied by conditions~\eqref{e:chain1}, which are therefore sufficient for the IECP to admit a solution.
\end{enumerate}	
\end{proof}

\begin{remark}
Condition~\eqref{e:complete} for complete graphs is related to the problem of realizing prescribed positive row sums by a symmetric matrix with zero diagonal, i.e., a weighted analog of the degree-realization problem. 
The classical realizability results and constructive procedures in this direction go back to Hakimi~\cite{Hakimi1962}, who studied realizability of prescribed degrees and provided criteria/algorithms (Havel-Hakimi-type) for complete-graph realizations. 
\end{remark}

\section{Conclusions}

In this paper, we provided a complete characterization of the inverse eigenvector centrality problem for connected undirected graphs, showing that feasibility is governed by global constraints involving stable sets and their neighborhoods. 
These conditions fully determine whether a positive vector can be realized as an eigenvector centrality. While the characterization is complete from a theoretical standpoint, its direct application in practice may be challenging due to the combinatorial nature of the conditions.

A natural direction for future research concerns approximation: when a target centrality vector is not realizable, it is relevant to ask whether there exists a 'closest' realizable vector and how closeness should be defined. 
Possible approaches include minimizing distances between the target and feasible vectors, or preserving the induced ranking as much as possible. 
Investigating the existence, uniqueness, and efficient computation of such approximations represents an interesting avenue for further study.

\subsection*{Acknowledgements}

The research of F.R. was supported by the MUR research
programs PRIN2022 founded by the European Union - Next Generation EU (Project ``ACHILLES, eco-sustAinable effiCient tecHdrIven Last miLE logiStics'', CUP: E53D23005640006). 
It was also partially supported by the research project ``Programma ricerca di Ateneo UNICT 2024-26 NOVA - Network Optimization and Vulnerability Assessment'' of the University of Catania.

The authors are members of the Gruppo Nazionale per l'Analisi Matematica, la Probabilit\`a e le loro Applicazioni (GNAMPA - National Group for Mathematical Analysis, Probability and their Applications) of the Istituto Nazionale di Alta Matematica (INdAM - National Institute of Higher Mathematics).

\bibliography{references}
\bibliographystyle{acm}

\end{document}